\theoremstyle{plain}
\newtheorem{theorem}{Theorem}
\newtheorem{proposition}[theorem]{Proposition}
\newtheorem{lemma}[theorem]{Lemma}
\newtheorem{corollary}[theorem]{Corollary}
\theoremstyle{definition}
\newtheorem{remark}[theorem]{Remark}
\newcommand{\abs}[1]{\lvert#1\rvert}
\newcommand{\FF}{\mathbb F}
\DeclareMathOperator{\gL}{\Gamma L}
\DeclareMathOperator{\GL}{GL}
\DeclareMathOperator{\PGL}{PGL}
\begin{document}
\title{A note on Galois groups of linearized polynomials}
\author{Peter M\"uller\\[1mm]
  Institute of Mathematics, University of W\"urzburg\\
  \texttt{peter.mueller@uni-wuerzburg.de}}
\maketitle
\begin{abstract}
  Let $L(X)$ be a monic $q$-linearized polynomial over $\FF_q$ of
  degree $q^n$, where $n$ is an odd prime. In \cite{gow_mcguire_glnq},
  Gow and McGuire showed that the Galois group of $L(X)/X-t$ over the
  field of rational functions $\FF_q(t)$ is $\GL_n(q)$ unless
  $L(X)=X^{q^n}$. The case of even $q$ remained open, but it was
  conjectured that the result holds too and partial results were
  given. In this note we settle this conjecture. In fact we use
  Hensel's Lemma to give a unified proof for all prime powers $q$.
\end{abstract}
\section{Introduction}
In this note we extend the main result \cite[Theorem
2]{gow_mcguire_glnq} from odd prime powers $q$ to all prime powers. A
$q$-linearized polynomial over $\FF_q$ of $q$-degree $n$ is defined as
$L(X)=\sum_{i=0}^na_iX^{q^i}$ where $a_i\in\FF_q$ and $a_n\ne0$.
\begin{theorem}\label{t:main}
  Let $q$ be a prime power and $n$ be an odd prime. Let $L(X)$ be a
  monic $q$-linearized polynomial over $\FF_q$ of $q$-degree $n$. Then
  the Galois group of $L(X)/X-t$ over $\FF_q(t)$ is $\GL_n(q)$ unless
  $L(X)=X^{q^n}$.
\end{theorem}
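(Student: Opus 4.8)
The plan is to realize the group $G:=\operatorname{Gal}\bigl(L(X)/X-t \,\big/\, \FF_q(t)\bigr)$ as a subgroup of $\GL_n(q)$ and then to pin it down from the ramification of the covering $\phi\colon X\mapsto L(X)/X$ at the only two relevant places of $\FF_q(t)$, namely $t=\infty$ and $t=a_0$. Since $F(X):=L(X)-tX$ is additive and separable in $X$ (its derivative is $a_0-t\ne0$), its set of roots over the splitting field is an $n$-dimensional $\FF_q$-vector space $V$, and $G$ acts $\FF_q$-linearly and faithfully on $V$: the $q^n-1$ nonzero roots are exactly the roots of $L(X)/X-t$, and their number equals $\abs{V}-1$. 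This yields the embedding $G\hookrightarrow\GL(V)=\GL_n(q)$, so the whole task is to prove $G=\GL_n(q)$. For this it suffices to produce inside $G$ (i) a Singer cycle and (ii) a nontrivial $p$-element fixing a subspace of dimension at least $2$.

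For (i) I would analyze $t=\infty$. The polynomial $\phi(X)=L(X)/X$ has a single pole, at $X=\infty$, of order $q^n-1$, so $t=\infty$ is totally ramified of index $q^n-1$, and this is tame since $\gcd(q^n-1,p)=1$. Passing to the completion $\FF_q((1/t))$ and applying Hensel's Lemma (equivalently a Newton-polygon argument after making $1/t$ the uniformizer) shows $L(X)/X-t$ is irreducible over $\FF_q((1/t))$, so the inertia group $I_\infty\le G$ is cyclic of order $q^n-1$ and acts regularly on $V\setminus\{0\}$. A cyclic subgroup of $\GL_n(q)$ acting regularly on the nonzero vectors is precisely a Singer cycle, so $C:=I_\infty$ is a Singer cycle and $G$ is irreducible. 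Moreover $\det$ restricted to $C$ is the norm map $\FF_{q^n}^\times\to\FF_q^\times$, which is surjective, so $\det(G)=\FF_q^\times$.

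For (ii) I would locate the finite branch points. A direct computation gives $\phi'(X)=-\bigl(L(X)-a_0X\bigr)/X^2$, and every nonzero zero $\beta$ of $L_0(X):=L(X)-a_0X$ satisfies $\phi(\beta)=a_0$; hence $t=a_0$ is the only finite branch point, and nonzero critical points exist exactly when $L\ne X^{q^n}$. Writing $s=t-a_0$ and reducing modulo $s$, the roots of $L(X)/X-t$ specialize to $X=0$ and to the nonzero zeros of $L_0$; as these points are distinct, Hensel's Lemma lifts this factorization of the reduction to a factorization of $L(X)/X-t$ over $\FF_q[[s]]$ into coprime factors: one tame factor supported at $X=0$ and one wildly ramified factor (of index a power of $q$) at each nonzero zero of $L_0$. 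The wild inertia $P\le G$ is therefore a nontrivial $p$-group that fixes pointwise the subspace $U_0\subseteq V$ of roots specializing to $0$ and induces the identity on $V/U_0$; hence every nontrivial element of $P$ fixes a subspace of dimension at least $2$ (a hyperplane if $\dim U_0=1$, using $n\ge3$, and a space containing $U_0$ otherwise).

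To finish I would invoke Kantor's classification of the subgroups of $\GL_n(q)$ containing a Singer cycle: for $n$ an odd prime such a group either contains $\operatorname{SL}_n(q)$ or lies in the Singer normalizer $\gL_1(q^n)$. Every nontrivial $p$-element of $\gL_1(q^n)$ is Frobenius-like and fixes only a $1$-dimensional subspace (here $\gcd(i,n)=1$ forces this), so the element from (ii) excludes the second alternative; thus $\operatorname{SL}_n(q)\le G$, and together with $\det(G)=\FF_q^\times$ this gives $G=\GL_n(q)$. When $L=X^{q^n}$ the situation is genuinely exceptional: $\phi(X)=X^{q^n-1}$, there is no wild ramification, and $G$ is the metacyclic Singer normalizer, not $\GL_n(q)$. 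The step I expect to be the main obstacle is the local analysis at $t=a_0$—making the Hensel factorization and the resulting wild-inertia structure precise uniformly in $q$, which is exactly the even-$q$ difficulty left open by Gow--McGuire; once that is in hand, the group-theoretic endgame is routine.
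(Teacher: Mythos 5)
Your proposal is correct, and up to the decisive step it follows the paper's skeleton: $G\le\GL_n(q)$, a regular cyclic inertia group of order $q^n-1$ at $t=\infty$ (the paper cites Turnwald for exactly this), and Kantor's Singer-cycle theorem reducing everything to the dichotomy $G=\GL_n(q)$ or $G\le\gL_1(q^n)$. Where you genuinely part ways is in how the second alternative is killed. The paper proves a purely numerical statement (Proposition \ref{p:main}): via B\'ezout ($sb-ra=1$), the substitution $X\mapsto Xz^r$, Hensel's Lemma and Eisenstein, the polynomial $f(X)-g(y)$ acquires an irreducible factor of degree $a$ over $E((z))$ whenever $f,g$ have roots of coprime multiplicities $a,b$; applied to $L(X)/X=X^{q^m-1}h(X)^{q^m}$ with $h$ separable, this yields $q^m(q^m-1)(q^n-1)\mid\abs{G}$, and $G\le\gL_1(q^n)$ then dies by arithmetic, since $q^m(q^m-1)\mid n$ is impossible for an odd prime $n$. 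You instead extract from the Hensel factorization at $t=a_0$ the inertia structure itself: a nontrivial wild inertia group $P$ fixing the $m$-dimensional subspace $U_0$ pointwise and acting trivially on $V/U_0$, so every nontrivial element of $P$ has fixed space of dimension at least $\max(m,n-m)\ge 2$, whereas a nontrivial $p$-element of $\gL_1(q^n)$ exists only if $n=p$ and then, having the form $x\mapsto cx^{q^i}$ with $\gcd(i,n)=1$, fixes at most $q^{\gcd(i,n)}-1=q-1$ nonzero vectors, i.e.\ at most a line. Both routes hinge on the same local computation and are uniform in $q$ (the even-$q$ difficulty evaporates in both); the paper's version buys more generality (its Lemma and Corollary apply to arbitrary polynomials and all $n$, composite included, and never use the linear action), while yours is more structural and closer in spirit to the wild-ramification analysis of Gow and McGuire, with Hensel supplying the uniformity they lacked. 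To make your step (ii) airtight you should: pass to $E[[s]]$ with $E$ a splitting field of the separable part before splitting off one factor per nonzero zero of $L_0$; justify the multiplicities via $L_0(X)=S(X)^{q^m}$ with $S(X)=\sum_{i=m}^n a_iX^{q^{i-m}}$ separable (this is the paper's computation with $h$, using $a_m-Xh'(X)=h(X)$), giving tame degree $q^m-1$ at $X=0$ and wild index exactly $q^m$ at each $\beta$; and note that the roots above $X=0$ lie in the maximal tame subextension because a compositum of tamely ramified extensions is tame, which is what makes $P$ fix $U_0$ pointwise (triviality on $V/U_0$ follows, as you say, from inertia acting trivially on the residue field). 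Two cosmetic points: quoting Kantor in the form of Remark \ref{r:singer} ($\GL_{n/d}(q^d)\le G\le\gL_{n/d}(q^d)$), the case $d=1$ gives $G=\GL_n(q)$ outright, so your determinant/norm step, though correct, is unnecessary; and, exactly as in the paper, the exceptional case should be stated after the normalization $t\mapsto t-a_0$, i.e.\ what is really excluded is $L(X)=X^{q^n}+a_0X$, not only $L(X)=X^{q^n}$.
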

Using Hensel's Lemma we prove the following divisibility result for
the order of the Galois group. Note that $n$ need neither be odd, nor
be a prime.
\begin{proposition}\label{p:main}
  Let $q$ be a prime power and $n$ be a positive integer. Set
  $L(X)=\sum_{i=m}^na_iX^{q^i}$ where $a_i\in\FF_q$, $a_m, a_n\ne0$
  and $1\le m\le n-1$. Then $q^m\cdot(q^m-1)\cdot(q^n-1)$ divides the
  order of the Galois group of $L(X)/X-t$ over $\FF_q(t)$.
\end{proposition}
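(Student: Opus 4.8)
The strategy is to split off the factor $q^n-1$ as a field degree and then to realize $q^m$ and $q^m-1$ as ramification indices at two separate places of an intermediate field, so that these two coprime numbers multiply automatically inside the group order. Writing $M(X)=L(X)/X=\sum_{i=m}^na_iX^{q^i-1}$, the polynomial in question is $M(X)-t$; being linear in $t$ with unit content in $X$, it is irreducible over $\FF_q(t)$ of degree $q^n-1$. Let $v$ denote one root and $E$ the splitting field. The tower $\FF_q(t)\subseteq\FF_q(v)\subseteq E$ gives $\abs G=[E:\FF_q(v)]\cdot(q^n-1)$, so it suffices to prove $q^m(q^m-1)\mid[E:\FF_q(v)]$; as $\gcd(q^m,q^m-1)=1$, it is enough to find two places of the rational field $\FF_q(v)$ at which $E$ ramifies with indices divisible by $q^m$ and by $q^m-1$ respectively.

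Over $\FF_q(v)$ the relevant additive polynomial is $\tilde L(X)=L(X)-M(v)X$, which is separable because $L'=0$ forces $\tilde L'(X)=-M(v)\ne0$. I would record the factorization $L(X)=X^{q^m}R(X)$ with $R(0)=a_m\ne0$ and $R(X)=S(X^{q^m})$, where $S(Z)=\sum_{j=0}^{n-m}a_{m+j}Z^{q^j-1}$ is separable since its linear coefficient $a_m$ is nonzero. Hence every root $\beta$ of $R$ has multiplicity exactly $q^m$.

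Each local step combines the Newton polygon with Hensel's Lemma. At $v=0$, the function $M(v)$ vanishes to order $q^m-1$, and the Newton polygon of $\tilde L$ consists of a segment of slope $-1$ (yielding $q^m-1$ roots of valuation $1$, which lie in an unramified extension) together with a flat segment carrying the $q^n-q^m$ roots of valuation $0$. The flat part has residual polynomial $R$, which is inseparable, so Hensel's Lemma does not split it: writing such a root as $\beta+U$ with $R(\beta)=0$ gives $\tilde L(U)=M(v)\beta$, and balancing $a_mU^{q^m}$ against the right-hand side forces $U$ to have valuation $(q^m-1)/q^m$. The denominator $q^m$ signals wild ramification, so $q^m\mid[E:\FF_q(v)]$. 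At a place $v=\beta$ with $\beta$ a root of $R$, the multiplicity statement shows $M(v)$ vanishes to order $q^m$; now the Newton polygon carries the roots tending to $0$ on a segment of slope $-q^m/(q^m-1)$, so these have valuation $q^m/(q^m-1)$ and the ramification is tame of index $q^m-1$, giving $q^m-1\mid[E:\FF_q(v)]$.

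The crux, and the place where a naive approach breaks down, is the combination of factors: reading off the inertia groups at $t=\infty$ and at $t=0$ directly would produce cyclic subgroups of orders $q^n-1$ and $q^m-1$, but these are not coprime, so their product need not divide $\abs G$. The device that removes this obstacle is to move $q^n-1$ out of the Galois group and into the base field, realizing it as $[\FF_q(v):\FF_q(t)]$, and to retain only the genuinely coprime pair $q^m,\,q^m-1$ inside $[E:\FF_q(v)]$. A secondary subtlety to handle carefully is that the wild ramification at $v=0$ is invisible to the Newton polygon—whose slopes there are integral—and surfaces only through the inseparability of the residual polynomial $R$ and the resulting fractional valuation of $U$.
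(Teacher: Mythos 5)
Your argument is correct, and its skeleton coincides with the paper's: you split off the factor $q^n-1$ as $[\FF_q(v):\FF_q(t)]$ using the irreducibility of $M(X)-t$ (exactly the reduction $[G:H]=\deg(f(X)-t)$ in the paper's Corollary), and you then obtain $q^m\mid[E:\FF_q(v)]$ and $q^m-1\mid[E:\FF_q(v)]$ from two local computations, at $v=0$ and at a root $\beta$ of the separable part of $M$ --- precisely the two applications of Lemma \ref{l:main} with the multiplicity pairs $(q^m,q^m-1)$ and $(q^m-1,q^m)$. Where you differ is the implementation of the local step: the paper's Lemma \ref{l:main} makes the ramified substitution $y=z^s$ with $sb-ra=1$ and uses Hensel's Lemma plus Eisenstein to manufacture an irreducible factor of degree $a$ over $E((z))$, whereas you read off divisibility of a ramification index from the fractional valuations $(q^m-1)/q^m$ and $q^m/(q^m-1)$. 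These mechanisms are equivalent (a degree-$a$ Eisenstein factor is the same thing as a totally ramified local factor of degree $a$), and your version is essentially the Newton-polygon variant that the paper itself sketches in the Remark following Lemma \ref{l:main}; your observation that at $v=0$ the initial polygon has only integral slopes and the wildness surfaces only after shifting by $\beta$ corresponds to the paper's replacement of $X$ by $X+\alpha$ before applying Hensel. Two points you gloss over should be said explicitly. First, $\beta$ need not lie in $\FF_q$, nor a priori in $E$, so both local arguments implicitly extend the constant field; this is harmless because constant field extensions of function fields are unramified (equivalently, because $\mathrm{Gal}(EE_0/E_0(v))$ embeds in $\mathrm{Gal}(E/\FF_q(v))$ for $E_0$ a splitting field of $L$ over $\FF_q$) --- the paper preempts the issue by working over $E((z))$ with $E$ a splitting field from the outset. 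Second, your side claim that the $q^m-1$ roots of valuation $1$ at $v=0$ lie in an unramified extension requires checking that the residual polynomial $a_m(Y^{q^m}-Y)$ is separable (it is, but the claim is not needed for your conclusion). Finally, ``$S$ is separable since its linear coefficient $a_m$ is nonzero'' is a slip of phrasing: $a_m$ is the constant coefficient of $S$; the correct statement is that the linearized polynomial $ZS(Z)$ has coefficient $a_m\ne0$ of $Z$ and hence is separable, so the roots of $S$ are simple --- equivalent to the paper's identity $a_m-Xh'(X)=h(X)$.
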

We show how Theorem \ref{t:main} follows from Proposition
\ref{p:main}. Upon replacing $t$ with $t-a_0$ one may assume that
$a_0=0$. Let $G$ be the Galois group of $L(X)/X-t$ over $\FF_q(t)$. It
is well known and easy to prove that $G\le\GL_n(q)$. Furthermore,
ramification at $t=\infty$ tells that $G$ contains a cyclic regular
subgroup of order $q^n-1$. (See e.g.\ \cite[Lemma 3.3]{Turnwald:Schur}
for a short proof.) A group theoretic classification result (see
Remark \ref{r:singer}) shows that $G=\GL_n(q)$ or
$G\le\gL_1(q^n)$. The latter group has order $n\cdot(q^n-1)$. Suppose
that $G\ne\GL_n(q)$. Proposition \ref{p:main} yields
$q^m\cdot(q^m-1)\cdot(q^n-1)\mid n\cdot(q^n-1)$. This implies
$q^m\mid n$, hence $m=1$ and $q=n$ because $n$ is a prime. The
contradiction $n-1\mid1$ shows that there is no $m$ with
$1\le m\le n-1$ such that $a_m\ne0$, hence we have the excluded case
$L(X)=X^{q^n}$.
\section{Proof of Proposition \ref{p:main}}
In the following we let $y$ be a variable over the field $K$ and $X$
be a variable over the field of rational functions $K(y)$.

The proof of the following lemma uses Hensel's Lemma and the
Eisenstein irreducibility criterion for polynomials with coefficients
in a power series ring over a field.
\begin{lemma}\label{l:main}
  Let $f$ and $g$ be non-constant polynomials over $K$. Suppose that
  \begin{itemize}
  \item[(i)] $f$ has a root of multiplicity $a$,
  \item[(ii)] $g$ has a root of multiplicity $b$,
  \item[(iii)] $a$ and $b$ are relatively prime, and
  \item[(iv)] $f(X)-g(y)$ is separable over $K(y)$.
  \end{itemize}
  Then $a$ divides the order of the Galois group of $f(X)-g(y)$ over
  $K(y)$.
\end{lemma}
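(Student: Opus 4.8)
The plan is to locate the divisor $a$ inside a decomposition subgroup of $G:=\mathrm{Gal}(f(X)-g(y)/K(y))$ attached to a place lying over a root of $g$. Let $\alpha,\beta\in\overline K$ be roots of $f$ and $g$ of multiplicities $a$ and $b$, and set $K':=K(\alpha,\beta)$. First I would observe that base change to $K'$ only shrinks $G$ to a subgroup: writing $M$ for the splitting field of $h(X):=f(X)-g(y)$ over $K(y)$, the translation theorem gives $\mathrm{Gal}(MK'(y)/K'(y))\cong\mathrm{Gal}(M/M\cap K'(y))\le G$, and $MK'(y)$ is the splitting field of $h$ over $K'(y)$. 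Since separability (iv) and the multiplicity and coprimality hypotheses survive base change, it suffices to treat the case $\alpha,\beta\in K$; dividing the separable polynomial $h$ by the leading coefficient of $f$ (which changes neither the splitting field nor the hypotheses) lets me also assume $f$ monic, so that $f(X)=(X-\alpha)^a u(X)$ with $u(\alpha)\neq0$ and $g(y)=(y-\beta)^b v(y)$ with $v(\beta)\neq0$.

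Next I would pass to the completion at the place $y=\beta$. With $s:=y-\beta$ the completion is $K((s))$, and $g(y)=s^b\eta$ for a unit $\eta\in K[[s]]$. Because $M/K(y)$ is Galois (this is where (iv) is used), the decomposition group of any place of $M$ over $y=\beta$ is isomorphic to $D:=\mathrm{Gal}(N/K((s)))$, where $N$ is the splitting field of $h$ over $K((s))$, and $|D|\mid|G|$; hence it is enough to prove $a\mid|D|$. Reducing modulo $s$, the polynomial $h$ becomes $f(X)=(X-\alpha)^a u(X)$, a product of the coprime factors $(X-\alpha)^a$ and $u(X)$, so Hensel's Lemma over $K[[s]]$ yields a factorization $h=P(X)Q(X)$ with $P$ monic of degree $a$, $P\equiv(X-\alpha)^a\pmod s$, and $Q(\alpha)$ a unit.

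Finally I would exploit $\gcd(a,b)=1$. Fix a root $W:=X-\alpha$ of $P$; it lies in $N$, and since $P\equiv(X-\alpha)^a\pmod s$ it satisfies $W\to0$ as $s\to0$, so $u(\alpha+W)$ is a unit and the identity $W^a u(\alpha+W)=s^b\eta$ forces $a\,v(W)=b$ for the valuation $v$ normalized by $v(s)=1$; thus $v(W)=b/a$, and in particular $W\neq0$. Choosing integers $m,n$ with $mb+na=1$, the element $\pi:=W^m s^n\in N$ has $v(\pi)=1/a$, so the value group of $K((s))(\pi)$ contains $\tfrac1a\mathbb Z$ and the ramification index over $K((s))$ is divisible by $a$; hence $[K((s))(\pi):K((s))]\ge a$. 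Since $\pi\in K((s))(W)$ and $[K((s))(W):K((s))]\le\deg P=a$, both degrees must equal $a$, so $K((s))(\pi)$ is a totally ramified degree-$a$ subextension of $N$ (equivalently, $P$ is irreducible, by the Eisenstein/Newton-polygon criterion for the slope $b/a$ in lowest terms). By the tower law $a=[K((s))(\pi):K((s))]$ divides $[N:K((s))]=|D|$, and therefore $a\mid|G|$.

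I expect the crux to be exactly this last step: converting the numerical coprimality $\gcd(a,b)=1$ into an honest degree-$a$ subextension. Everything hinges on the valuation identity $v(W)=b/a$, which in turn relies on Hensel's Lemma to localize the roots near $\alpha$ so that $u(\alpha+W)$ remains a unit; combined with $\gcd(a,b)=1$ this is precisely what prevents $P$ from splitting further and pins down a totally ramified piece of degree exactly $a$. The remaining ingredients—the base change to $\alpha,\beta\in K$, the normalization to monic $f$, and the identification of decomposition groups with local Galois groups—are routine.
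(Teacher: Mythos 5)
Your proof is correct, but it takes a genuinely different route from the paper's main argument. Both proofs share the same skeleton: reduce to the case where the distinguished roots are rational, embed the base field into a Laurent series field, and apply Hensel's Lemma to split off a monic degree-$a$ factor congruent to $(X-\alpha)^a$ modulo the uniformizer. They diverge at the irreducibility step. The paper first performs the ramified base change $y=z^s$ together with the rescaling $X\mapsto Xz^r$, where $sb-ra=1$, precisely so that the degree-$a$ Hensel factor becomes an Eisenstein polynomial in $z$; irreducibility is then immediate and no valuation computations in the splitting field are needed. You instead stay over the completion $K((s))$ at $y=\beta$, compute the valuation $v(W)=b/a$ of the roots of the Hensel factor $P$, and use B\'ezout ($mb+na=1$) to manufacture the element $\pi=W^ms^n$ of value $\tfrac1a$, which forces a totally ramified subextension of degree exactly $a$ and hence the divisibility $a\mid\abs{D}\mid\abs{G}$. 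This is in substance the Newton polygon variant that the paper itself sketches in the Remark following the lemma (the segment of slope $b/a$ in lowest terms yields exactly $a$ roots of that valuation and an irreducible degree-$a$ factor over $E((y))$), except that you derive the needed slope fact directly from the Hensel factorization rather than quoting Newton polygon properties. What the paper's route buys is an argument running entirely inside $E[[z]]$ with only the elementary Eisenstein criterion; what yours buys is avoiding the somewhat ad hoc substitutions $y=z^s$, $X\mapsto Xz^r$, and making the ramification-theoretic content (the decomposition subgroup $D$, ramification index divisible by $a$) explicit. Two cosmetic points: ``a root $W:=X-\alpha$ of $P$'' should read ``a root $x$ of $P$, and set $W:=x-\alpha$''; and the step $v(W)>0$ deserves its one-line justification, namely that $W$ is integral over $K[[s]]$ because $P$ is monic, and $P\equiv(X-\alpha)^a\pmod{s}$ forces every root of $P$ to reduce to $\alpha$ modulo the maximal ideal.
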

\begin{proof}
  Let $E$ be a common splitting field of $f$ and $g$ over $K$, and
  $\alpha$ and $\beta$ be the roots of $f$ and $g$ of multiplicity $a$
  and $b$, respectively. As $a$ and $b$ are relatively prime, there
  are positive integers $r$ and $s$ such that $sb-ra=1$. Choose $z$
  with $z^s=y$. Let $E[[z]]$ be the ring of formal power series in $z$
  over $E$ and $E((z))$ be its quotient field, which is the field of
  formal Laurent series in $z$.

  By definition, $K(y)$ is a subfield of $E((z))$, so the Galois group
  of $f(X)-g(y)$ over $E((z))$ is a subgroup of the Galois group of
  $f(X)-g(y)$ over $K(y)$.

  Thus we are done once we know that $f(X)-g(y)$ has an irreducible
  factor over $E((z))$ of degree $a$. Upon replacing $X$ and $y$ with
  $X+\alpha$ and $y+\beta$ we may and do assume that $\alpha=\beta=0$.

  Thus $f(X)=X^au(X)$ and $g(y)=y^bv(y)=z^{sb}v(z^s)$ with
  $u(0)\ne0\ne v(0)$. The degrees of the irreducible factors of
  $f(X)-g(y)$ over $E((z))$ don't change if we replace $X$ with
  $Xz^r$. Recall that $sb-ra=1$, so we are concerned with the
  factorization of\[ H(X)=X^au(Xz^r)-zv(z^s).
  \]
  As $X^a$ and $u(Xz^r)$ are relatively prime, Hensel's Lemma provides
  a factorization $H(X)=A(X)U(X)$ over $E[[z]]$ such that
  $A(X)\equiv X^a\pmod{z}$. 
  Furthermore, $A(0)U(0)=H(0)=-zv(z^s)$, so $z^2$ does not divide
  $A(0)$. Therefore $A(X)$ is an Eisenstein polynomial of degree $a$
  with respect to $z$, and we are done.
\end{proof}
\begin{remark}
  A variant of the proof would use standard facts about Newton
  polygons. Assume as above that $f(X)=Z^au(X)$ and
  $g(y)=y^bv(y)$. Let $\nu$ be a valuation of the splitting field of
  $f(X)-g(y)$ over $E((y))$ ($E$ as above) which extends the $y$-adic
  valuation of $E((y))$.

  The Newton polygon of $f(X)-g(y)$ with respect to $\nu$ consists of
  two line segments. The first segment connects $(0, b)$ with
  $(a, 0)$, and the second segment connects $(a, 0)$ with $(n, 0)$.

  From known properties of the Newton polygon, we get that $f(X)-g(y)$
  has exactly $a$ roots $\gamma$ with $\nu(\gamma)=\frac{b}{a}$. Let
  $\Gamma$ be the set of these roots. Then
  $\prod_{\gamma\in\Gamma}(X-\gamma)$ has coefficients in $E((y))$ and
  is irreducible over $E((y))$.
\end{remark}
\begin{corollary}
  Let $f\in K[X]$ be a polynomial of degree $n$ for which two of its
  roots (in some extension of $K$) have relatively prime
  multiplicities $a$ and $b$. Furthermore, assume that $f(X)-t$ is
  separable over the field $K(t)$ of rational functions in $t$. Then
  $a\cdot b\cdot n$ divides the order of the Galois group of $f(X)-t$
  over $K(t)$.
\end{corollary}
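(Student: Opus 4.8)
The plan is to apply Lemma \ref{l:main} with $g=f$ and to identify the resulting Galois group with a point stabilizer inside the Galois group $G$ of $f(X)-t$ over $K(t)$. First I would record the two standard facts that drive everything. Since $f$ has degree $n$ and $t$ is transcendental over $K$, the polynomial $f(X)-t$ is irreducible over $K(t)$; hence $G$ acts transitively on its $n$ roots $x_1,\dots,x_n$ (distinct, by the separability hypothesis) in a splitting field $\Omega$, and the orbit--stabilizer theorem gives $\abs{G}=n\cdot\abs{G_{x_1}}$, where $G_{x_1}$ is the stabilizer of the root $x_1$. In particular $n\mid\abs{G}$.

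Next I would set $y:=x_1$ and observe that $y$ is transcendental over $K$ (because $t=f(y)$ is, and $[K(y):K(t)]$ is finite), so $K(y)$ is a rational function field with $t=f(y)$, and $\Omega$ is a splitting field of $f(X)-f(y)=f(X)-t$ over $K(y)$. Consequently the Galois group of $f(X)-f(y)$ over $K(y)$ is exactly $G_{x_1}$. For the separability hypothesis of Lemma \ref{l:main} in this setting: the discriminant of $f(X)-t$ as a polynomial in $X$ is a nonzero element of $K[t]$ (this is precisely separability of $f(X)-t$ over $K(t)$), and substituting the transcendental $t=f(y)$ keeps it nonzero, so $f(X)-f(y)$ is indeed separable over $K(y)$.

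Now the two root multiplicities enter symmetrically. Applying Lemma \ref{l:main} with one copy of $f$ contributing the root of multiplicity $a$ and the other copy (in the role of $g$) contributing the root of multiplicity $b$ --- these being coprime by hypothesis --- yields $a\mid\abs{G_{x_1}}$, and swapping the roles of $a$ and $b$ yields $b\mid\abs{G_{x_1}}$. Since $\gcd(a,b)=1$, we get $ab\mid\abs{G_{x_1}}$, and combining with $\abs{G}=n\cdot\abs{G_{x_1}}$ gives $a\cdot b\cdot n\mid\abs{G}$. The only step I expect to require genuine care is the clean identification of the Galois group of $f(X)-f(y)$ over $K(y)$ with the point stabilizer $G_{x_1}$, together with the transfer of separability from $f(X)-t$ to $f(X)-f(y)$; once these are in place, the remainder is orbit--stabilizer bookkeeping and a symmetric double application of the lemma.
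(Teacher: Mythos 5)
Your proposal is correct and takes essentially the same route as the paper: identify the Galois group of $f(X)-f(y)$ over $K(y)$ with the stabilizer of a root $y$ of the irreducible polynomial $f(X)-t$ (giving index $n$), then apply Lemma \ref{l:main} twice with $g=f$, swapping the roles of the multiplicities $a$ and $b$ and combining via coprimality. Your additional verifications (transcendence of $y$, transfer of separability via the discriminant) are details the paper leaves implicit, but they fill in the same argument rather than changing it.
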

\begin{proof}
  Let $G$ be the Galois group of $f(X)-t$ over $K(t)$. Define $y$ in
  an extension of $K(t)$ by $f(y)=t$. Then $y$ is a root of
  $f(X)-t$. So the Galois group $H$ of $f(X)-t=f(X)-f(y)$ over $K(y)$
  is the stabilizer in $G$ of the root $y$. As $f(X)-t$ is irreducible
  over $K(t)$ we have $[G:H]=\deg (f(X)-t)=n$. Thus we need to show
  that $a\cdot b$ divides $\abs{H}$.

  But this follows from Lemma \ref{l:main} with $f=g$. First we use it
  with a root of multiplicity $a$ of $f(X)$ and a root of multiplicity
  $b$ of $f(y)$, and secondly with a root of multiplicity $b$ of
  $f(X)$ and a root of multiplicity $a$ of $f(y)$.
\end{proof}
In order to prove Proposition \ref{p:main}, we are left to show that
$L(X)/X$ has roots of multiplicities $q^m-1$ and $q^m$. We compute
\begin{align*}
  L(X)/X%
  &= \sum_{i=m}^na_iX^{q^i-1}\\
  &= X^{q^m-1}\sum_{i=m}^na_iX^{q^i-q^m}\\
  &= X^{q^m-1}(\sum_{i=m}^na_iX^{q^{i-m}-1})^{q^m}.
\end{align*}
Set $h(X)=\sum_{i=m}^na_iX^{q^{i-m}-1}$. Note that
$a_m-Xh'(X)=h(X)$. As $a_m\ne0$, we see that $h$ is separable and
$h(0)\ne0$. From that we obtain the assertion about the multiplicities
of the roots of $L(X)/X$.
\begin{remark}\label{r:singer}
  The main tool from group theory in the proof of Theorem \ref{t:main}
  is \cite[Theorem 7]{gow_mcguire_glnq}: If a subgroup $G$ of
  $\GL_n(q)$ contains a Singer cycle (an element which permutes the
  nonzero elements of $\FF_q^n$ cyclically), then
  $\GL_{n/d}(q^d)\le G\le\gL_{n/d}(q^d)$ for some divisor $d$ of $n$.

  For this result Gow and McGuire refer to several much stronger results
  about transitive linear groups. In particular, these sources depend
  on the classification of the finite simple groups.

  In \cite{Kantor:Singer}, Kantor gives a short proof of this result
  without using the classification of the finite simple
  groups. His proof is based on the classification-free paper
  \cite{CamKan:2tranproj} where Cameron and Kantor determine the doubly
  transitive subgroups of $\PGL_n(q)$, which later turned out to
  contain mistakes. These were fixed by Kantor in the revision
  \cite{1806.02203}.
\end{remark}

\end{document}